\documentclass{article}
\usepackage[utf8]{inputenc}
\usepackage{amssymb}
\usepackage{amsmath, amssymb, amsthm}
\usepackage{url}
\newtheorem{conjecture}{Conjecture}
\usepackage{dsfont}
\usepackage{textcomp}
\usepackage{setspace}
\usepackage{float}
\usepackage{mathpazo}       
\usepackage[english]{babel}
\usepackage{graphicx}
\graphicspath{{./images/}}
\usepackage{amsmath}

\setlength{\topmargin}{-0.54cm}   
\setlength{\oddsidemargin}{0.46cm} 
\setlength{\evensidemargin}{0.46cm} 
\setlength{\textwidth}{15cm}       
\setlength{\textheight}{22cm}      
\setlength{\headheight}{0cm}       
\setlength{\headsep}{0.5cm}        
\setlength{\footskip}{1cm}         

\newtheorem{theorem}{\sc Theorem}[section]

\newtheorem{prop}[theorem]{\sc Proposition}
\newtheorem{cor}[theorem]{\sc Corollary}

\newtheorem{ex}[theorem]{\sc Example}

\usepackage[nottoc]{tocbibind}
\usepackage{cite}  

\begin{document}

\title{Proportion of Nilpotent Subgroups in Finite Groups and Their Properties}
\author{João Victor Monteiros de Andrade\thanks{Departamento de Computação, Universidade de Brasília. \texttt{jotandrade98@gmail.com}} 
\and Leonardo Santos\thanks{Departamento de Computação, Universidade de Brasília. \texttt{leonardo-7238@hotmail.com}}}

\maketitle

\section{Abstract}

\quad\,\, This work introduces and investigates the function \( J(G) = \frac{\text{Nil}(G)}{L(G)} \), where \( \text{Nil}(G) \) denotes the number of nilpotent subgroups and \( L(G) \) the total number of subgroups of a finite group \( G \). The function \( J(G) \), defined over the interval \((0,1]\), serves as a tool to analyze structural patterns in finite groups, particularly within non-nilpotent families such as supersolvable and dihedral groups. Analytical results demonstrate the product density
 of \( J(G) \) values in \((0,1]\), highlighting its distribution across products of dihedral groups.  Additionally, a probabilistic analysis was conducted, and based on extensive computational simulations, it was conjectured that the sample mean of \( J(G) \) values converges in distribution to the standard normal distribution, in accordance with the Central Limit Theorem, as the sample size increases. These findings expand the understanding of multiplicative functions in group theory, offering novel insights into the structural and probabilistic behavior of finite groups. 
 
\noindent{\textbf{Keywords:} Nilpotent subgroups; multiplicative functions; probabilistic analysis; dihedral groups; GAP}

\section{Introduction}

\quad \, The study of the structural properties of finite groups is a central theme in group theory, with special interest in understanding the distribution and influence of subgroups with specific characteristics. Among these properties, nilpotency stands out, widely studied for its direct relationship with the internal structure of groups and its applications in various algebraic contexts. Multiplicative functions defined based on subgroup characteristics, such as the degree of commutativity and the count of cyclic subgroups, have been explored to analyze asymptotic patterns and solubility criteria in finite groups [4], [6], [7].. 

In this work, we introduce the function $\mathfrak{J}(G)$, which expresses the ratio between the number of nilpotent subgroups and the total number of subgroups of a finite group \( G \), formally defined as:

\begin{equation}\label{eq.1}
   \mathfrak{J}(G) = \frac{\text{Nil}(G)}{L(G)}, 
\end{equation}

where \( \text{Nil}(G) \) represents the number of nilpotent subgroups of \( G \) and \( L(G) \) the total number of subgroups of \( G \). This function assumes values in the interval \((0, 1]\) and presents multiplicative behavior, allowing the analysis of structural patterns in direct products of groups with coprime orders.

The function $\mathfrak{J}(G)$ is particularly interesting when applied to families of non-nilpotent groups, such as supersolvable groups and, more specifically, dihedral groups. Through this approach, we explore the asymptotic behavior of $\mathfrak{J}(G)$ and demonstrate how this function can be used to characterize the nilpotency of groups and identify structural patterns in specific subclasses.

In addition, we develop a detailed analysis of the density of the values of $\mathfrak{J}(G)$ in the interval \((0,1]\), showing that these values are dense in this interval when considered as products of dihedral groups of specific orders. This result contributes to the understanding of the distribution of the function in different algebraic contexts.

Complementing the structural analysis, we perform a probabilistic investigation based on the Central Limit Theorem. We conjecture that the sample mean of the values of $\mathfrak{J}(G)$, obtained by random sampling of dihedral groups of increasing orders, converges in distribution to a standard normal \( N(0,1) \) when the sample size tends to infinity. This result demonstrates the probabilistic behavior of the function $\mathfrak{J}(G)$ and reinforces the robustness of its application in statistical analyses of properties of finite groups.

The present study extends the known results on multiplicative functions associated with finite groups, offering a new perspective for the analysis of structural and probabilistic properties. In particular, we highlight the density of values of $\mathfrak{J}(G)$ in the interval \((0,1]\) and the convergence to the standard normal distribution, as predicted by the Central Limit Theorem.

In the following sections, we will present the main properties of the function $\mathfrak{J}(G)$, demonstrate specific results for dihedral and dicyclic groups, and discuss possible future extensions and applications of this approach in group theory.

\subsection{Basic properties of $\mathfrak{J}$}
 Clearly this function is multiplicative, that is, if $gdc(|G_{1}|,...,|G_{n}|) = 1$, then

\begin{equation*}
    \mathfrak{J}(G_i \times ... \times G_n) = \prod_{i=1}^{n}\mathfrak{J}(G_{i}).
\end{equation*}

\begin{prop} \label{prop 1}
The group $G$ is nilpotent if and only if

\begin{equation*}
    \mathfrak{J}(G) = 1.
\end{equation*} 
\end{prop}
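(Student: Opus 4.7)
The plan is to unpack both directions from the definition $\mathfrak{J}(G) = \mathrm{Nil}(G)/L(G)$, noting that since every nilpotent subgroup is a subgroup, we always have $\mathrm{Nil}(G) \le L(G)$, and hence $\mathfrak{J}(G) = 1$ is equivalent to the condition that every subgroup of $G$ is nilpotent.

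For the forward implication, I would assume $G$ is nilpotent and invoke the standard structural fact that subgroups of a nilpotent group are themselves nilpotent. One clean way to see this is via the lower central series: if $\gamma_c(G) = 1$ and $H \le G$, then $\gamma_i(H) \le \gamma_i(G)$ for all $i$, so $\gamma_c(H) = 1$ as well. Consequently every $H \in L(G)$ lies in $\mathrm{Nil}(G)$, forcing $\mathrm{Nil}(G) = L(G)$ and $\mathfrak{J}(G) = 1$.

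For the reverse implication, I would simply observe that $G \le G$, so $G$ itself is counted by $L(G)$. If $\mathfrak{J}(G) = 1$, equivalently $\mathrm{Nil}(G) = L(G)$, then every element of $L(G)$ is nilpotent, and in particular $G$ belongs to $\mathrm{Nil}(G)$, meaning $G$ is nilpotent.

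There is essentially no obstacle here: the substantive content is the classical hereditary property of nilpotence for subgroups, which is a textbook result and can be cited rather than reproved. The argument is a short two-sentence consequence of the definition once that fact is in hand.
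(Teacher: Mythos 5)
Your proposal is correct and follows essentially the same route as the paper: the forward direction rests on the hereditary property of nilpotence for subgroups (which you additionally justify via the lower central series), and the converse uses the fact that $G$ itself is counted among its subgroups, so $\mathrm{Nil}(G) = L(G)$ forces $G$ to be nilpotent. If anything, your statement of the converse is slightly cleaner than the paper's, which appeals somewhat vaguely to ``trivial subgroups'' being counted rather than pointing directly at $G \le G$.
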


\begin{proof}
If $G$ is nilpotent every subgroup of $G$ will also be nipotent, this way, the numerator will coincide with the denominator and the result follows. The converse follows from the definition of the function since trivial subgroups are considered when counting subgroups.   
\end{proof}

From the Proposition \ref{prop 1} it follows that cyclic groups, abelians and finite p-groups will always return 1. In this case, it is convenient to use the function in families of non-nilpotent groups such as, for example, in supersoluble groups. One of the important subfamilies of this class is the family of finite dihedral groups.   

\begin{theorem}
Let \(D_{2n}\) be the dihedral group of order \(2n\) and \(\tau(n)\) be the number of positive divisors of \(n\). Then the total number of nilpotent subgroups of \(D_{2n}\) is given by an expression of the type
\[
\tau(n) \;+\; \sum_{\substack{2^r \mid n \\ r \geq 0}} \frac{n}{2^r}.
\]   
\end{theorem}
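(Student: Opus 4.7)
The plan is to use the standard classification of subgroups of $D_{2n}$, identify exactly which ones are nilpotent, and then add them up. I assume the presentation $D_{2n}=\langle r,s\mid r^{n}=s^{2}=1,\; srs^{-1}=r^{-1}\rangle$.

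First, I would recall (or reprove by a direct argument using the fact that a subgroup either lies in the rotation subgroup $\langle r\rangle$ or meets the reflections) the known subgroup classification: for every divisor $d$ of $n$ there is exactly one cyclic subgroup $\langle r^{n/d}\rangle$ of order $d$, giving $\tau(n)$ cyclic subgroups in total; and for every divisor $d$ of $n$ there are exactly $n/d$ dihedral subgroups of order $2d$, namely $\langle r^{n/d},\,r^{i}s\rangle$ for $0\leq i<n/d$, each of which is isomorphic to $D_{2d}$. These two families exhaust $L(D_{2n})$.

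Second, I would decide which of these subgroups are nilpotent. The cyclic ones are automatically nilpotent, contributing the summand $\tau(n)$. For the dihedral ones, I would prove the following characterization: $D_{2d}$ is nilpotent if and only if $d$ is a power of $2$ (the case $d=1$ giving $D_{2}\cong C_{2}$ is included). The easy direction is that $d=2^{r}$ forces $|D_{2d}|=2^{r+1}$, a prime power, hence nilpotent. For the converse I would invoke the theorem that a finite group is nilpotent iff it is the internal direct product of its Sylow subgroups: if $d$ has an odd prime divisor $p$, pick an element $x\in\langle r\rangle$ of order $p$ and any reflection $y$; then $yxy^{-1}=x^{-1}\neq x$, so the Sylow $p$-subgroup and the Sylow $2$-subgroup do not centralize each other and the direct product decomposition fails.

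Third, I would assemble the count. For each $r\geq 0$ with $2^{r}\mid n$, the divisor $d=2^{r}$ contributes $n/2^{r}$ nilpotent dihedral subgroups, while all other dihedral subgroups are non-nilpotent. Adding the $\tau(n)$ cyclic subgroups gives
\[
\operatorname{Nil}(D_{2n})=\tau(n)+\sum_{\substack{2^{r}\mid n\\ r\geq 0}}\frac{n}{2^{r}},
\]
which is exactly the stated expression.

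I expect the main obstacle to be the nilpotency test in step two, i.e.\ making the argument that $D_{2d}$ fails to be nilpotent as soon as $d$ has an odd prime divisor completely airtight; steps one and three are essentially bookkeeping once the correct list of subgroups and the correct nilpotency criterion are in hand.
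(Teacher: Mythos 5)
Your proposal is correct and follows essentially the same route as the paper: classify the subgroups of $D_{2n}$ into the $\tau(n)$ cyclic rotation subgroups and the $n/d$ dihedral subgroups of order $2d$ for each $d\mid n$, then observe that a dihedral subgroup is nilpotent exactly when it is a $2$-group, i.e.\ $d=2^{r}$. The only difference is cosmetic — the paper lists the $n$ order-$2$ reflection subgroups as a separate case rather than as the $r=0$ term of the sum — and your Sylow-based argument for the converse of the nilpotency test is in fact more explicit than the paper's.
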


\begin{proof}
  The cyclic subgroups of \(D_{2n}\) are derived from the rotation elements \(r\). For each divisor \(d\) of \(n\), there is exactly one cyclic subgroup of order \(d\), denoted by \(\langle r^{n/d} \rangle\). Since cyclic subgroups are always nilpotent, the total number of nilpotent cyclic subgroups in \(D_{2n}\) is given by \(\tau(n)\), the number of divisors of \(n\). Subgroups that contain reflections can be of two types: Subgroups isomorphic to \(D_{2d}\), for some divisor \(d\) of \(n\): So that \(D_{2d}\) is nilpotent, it must be a \(2\)-group, which implies that \(2d\) is a power of 2. Thus, \(d = 2^r\), with \( 2^r \mid n \). For each   \( d = 2^r \), there are \( \frac{n}{d}\) subgroups isomorphic to \(D_{2d}\). Subgroups of order 2 generated by individual reflections: For each reflection \(sr^k\), where \(k \in \{0, 1, \dots, n-1 \} \), there is a cyclic subgroup of order 2 . Since there are \(n\) reflections in \(D_{2n}\), there are \(n\) nilpotent subgroups of order 2 of this type.
\end{proof}

\begin{prop}
Let $G = D_{2n}$ be a finite dihedral group of order $2n$. If $n = 3p$, where $p \geq 3$ is prime, then

\begin{equation*}
    \lim_{p \longrightarrow \infty } \mathfrak{J}(G) = \frac{3}{4}.
\end{equation*} 
\end{prop}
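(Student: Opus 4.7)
The plan is to reduce the statement to an explicit rational function of $p$ by combining the formula of the preceding theorem with the classical count of all subgroups of $D_{2n}$, and then to let $p\to\infty$. For the numerator: since $n=3p$ with $p$ an odd prime, for all sufficiently large $p$ (in particular $p\neq 3$) the divisors of $n$ are precisely $1,3,p,3p$, giving $\tau(n)=4$; moreover $n$ is odd, so the only $r\geq 0$ with $2^r\mid n$ is $r=0$. Plugging these facts into the preceding theorem yields $\text{Nil}(D_{2n}) = \tau(n)+n = 3p+4$.

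For the denominator I would invoke the standard classification of subgroups of a dihedral group: every subgroup is either a cyclic subgroup of rotations (one per divisor $d$ of $n$) or a subgroup isomorphic to $D_{2d}$ (with exactly $n/d$ copies for each divisor $d$ of $n$, by the same case split already used inside the proof of the preceding theorem, now without the restriction that $d$ be a power of $2$). Summing,
$$
L(D_{2n}) \;=\; \tau(n) + \sum_{d\mid n}\tfrac{n}{d} \;=\; \tau(n) + \sigma(n),
$$
and for $n=3p$ with $p\neq 3$, multiplicativity of $\sigma$ gives $L(D_{2n}) = 4 + (1+3)(1+p) = 4(p+2)$.

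Combining the two counts gives
$$
\mathfrak{J}(D_{2n}) \;=\; \frac{3p+4}{4(p+2)},
$$
whose limit as $p\to\infty$ is $3/4$, as required. The only nonroutine ingredient is the identity $L(D_{2n})=\tau(n)+\sigma(n)$, since the preceding theorem supplies only $\text{Nil}(D_{2n})$; this is the main (though minor) obstacle, and can be handled either by citing the standard classification of subgroups of dihedral groups or by briefly extending the case split already present in the proof of that theorem. Once the denominator is in hand, the rest is pure bookkeeping and a limit of a degree-one rational function.
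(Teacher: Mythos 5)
Your proposal is correct and follows essentially the same route as the paper: both compute $\mathrm{Nil}(D_{6p}) = \tau(3p) + 3p = 3p+4$ from the preceding theorem (the sum over powers of $2$ collapsing to the single term $r=0$ since $3p$ is odd) and use the classical identity $L(D_{2n}) = \tau(n) + \sigma(n)$ (the paper cites Cavior for this) to get the denominator $4 + 4(p+1)$, after which the limit is immediate. Your explicit remark that $p \neq 3$ is needed for $\tau(3p)=4$ is a small point of care that the paper glosses over but which is harmless for the limit.
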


\begin{proof}
In fact, for a dihedral group, it is shown in [2]
   
   \begin{equation} \label{eq.2}
             L(D_{2n})  =  \tau(n) + \sigma(n)
     \end{equation}
     we can replace \ref{eq.2} em \ref{eq.1}

\begin{align*}
    \mathfrak{J}(D_{2n}) = \frac{Nil(D_{2n})}{\tau(n) + \sigma(n)} &= \frac{Nil(D_{2n})}{\tau(3p) + \sigma(3p)}\\ 
    &= \frac{n+4}{4 + 4 \Big(\frac{p^{2}-1}{p-1} \Big)}\\
    &= \frac{3p+4}{4 + 4( p + 1 )}\\
    &= \frac{3p}{4 + 4( p + 1 )} + \frac{4}{4 + 4( p + 1 )}.
\end{align*}

In this way there is

\begin{align*}
    \lim_{p \longrightarrow \infty }\mathfrak{J}(D_{2n}) = \lim_{p \longrightarrow \infty } \frac{3p}{4 + 4( p + 1 )} + \lim_{p \longrightarrow \infty } \frac{4}{4 + 4( p + 1 )} = \frac{3}{4}.
\end{align*}
\end{proof}

\begin{prop}
Let $G = D_{2n}$ be a finite dihedral group of order $2n$. If $n = 2\cdot3 \cdot p$, where $p \geq 5$ is prime, then

\begin{equation*}
    \lim_{p \longrightarrow \infty } \mathfrak{J}(G) = \frac{3}{4}.
\end{equation*} 
\end{prop}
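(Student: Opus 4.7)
The strategy is to mirror the argument of the preceding proposition: I would substitute $n = 6p$ into the two formulas already in hand — equation~\eqref{eq.2} for $L(D_{2n})$ and the preceding theorem for $\mathrm{Nil}(D_{2n})$ — simplify the resulting rational expression in $p$, and pass to the limit.

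First, I would compute $L(D_{2n})$. Since $p \geq 5$ is prime, the three integers $2$, $3$, $p$ are pairwise coprime, so $\tau$ and $\sigma$ are multiplicative on the factorisation $n = 2 \cdot 3 \cdot p$. This yields $\tau(6p) = 2 \cdot 2 \cdot 2 = 8$ and $\sigma(6p) = 3 \cdot 4 \cdot (p+1) = 12(p+1)$, so by \eqref{eq.2},
\[
L(D_{2n}) \;=\; 8 + 12(p+1) \;=\; 12p + 20.
\]

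Next, I would invoke the preceding theorem to count nilpotent subgroups. Because $p$ is an odd prime distinct from $2$, the largest power of $2$ dividing $6p$ is $2^{1}$, so the sum in the theorem reduces to the two terms $r=0$ and $r=1$, contributing $6p + 3p = 9p$. Therefore
\[
\mathrm{Nil}(D_{2n}) \;=\; \tau(6p) + 9p \;=\; 9p + 8.
\]
Combining these two computations produces
\[
\mathfrak{J}(D_{2n}) \;=\; \frac{9p + 8}{12p + 20},
\]
and letting $p \to \infty$ gives the limit $9/12 = 3/4$, as required.

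The proof poses essentially no real obstacle: once the two formulas are substituted, it is a one-line limit of a ratio of linear polynomials in $p$. The only subtle points are the two arithmetic observations that license clean application of those formulas, namely $\gcd(p,6) = 1$ (needed so that $\tau$ and $\sigma$ factor cleanly across $2 \cdot 3 \cdot p$) and $4 \nmid 6p$ (needed so the sum defining $\mathrm{Nil}(D_{2n})$ truncates at two terms); both follow automatically from the hypothesis that $p \geq 5$ is prime.
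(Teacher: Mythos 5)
Your proposal is correct and follows essentially the same route as the paper: substitute $n = 6p$ into $L(D_{2n}) = \tau(n)+\sigma(n)$ and the nilpotent-subgroup count to get $\mathfrak{J}(D_{2n}) = \frac{9p+8}{12p+20}$, then take the limit. In fact your write-up is slightly more careful than the paper's (which contains a typo writing $\tau(3p)+\sigma(3p)$ while correctly using the values $8$ and $12(p+1)$ for $\tau(6p)$ and $\sigma(6p)$), since you explicitly justify the multiplicative evaluation of $\tau$ and $\sigma$ and the truncation of the sum at $2^1$.
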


\begin{proof}
    \begin{align*}
    \mathfrak{J}(D_{2n}) = \frac{Nil(D_{2n})}{\tau(n) + \sigma(n)} &= \frac{Nil(D_{2n})}{\tau(3p) + \sigma(3p)}\\ 
    &= \frac{8+9p}{8 + 12 \Big(\frac{p^{2}-1}{p-1} \Big)}\\
    &= \frac{8+9p}{8 + 12( p + 1 )}\\
    &= \frac{9p}{8 + 12( p + 1 )} + \frac{8}{8 + 12( p + 1 )}.
\end{align*}

Then it follows

\begin{align*}
    \lim_{p \longrightarrow \infty }\mathfrak{J}(D_{2n}) = \lim_{p \longrightarrow \infty } \frac{9p}{8 + 12( p + 1 )} + \lim_{p \longrightarrow \infty } \frac{8}{8 + 12( p + 1 )} = \frac{3}{4}.
\end{align*}

\end{proof}

\begin{prop}
Let $G = D_{2p}$ be a finite dihedral group of order $2p$. If $p$ is prime, then

\begin{equation*}
    \lim_{p \longrightarrow \infty } \mathfrak{J}(G) = 1.
\end{equation*} 
\end{prop}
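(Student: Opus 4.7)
The plan is to substitute $n = p$ (with $p$ an odd prime) into the two closed-form expressions already available in the excerpt, namely $L(D_{2n}) = \tau(n) + \sigma(n)$ and the formula $\mathrm{Nil}(D_{2n}) = \tau(n) + \sum_{2^r \mid n} n/2^r$ from the preceding theorem, simplify, and let $p \to \infty$.

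First I would compute the denominator. Since $p$ is prime, $\tau(p) = 2$ and $\sigma(p) = p+1$, so $L(D_{2p}) = p+3$. Then I would compute the numerator using the Nil formula. The cyclic part contributes $\tau(p) = 2$ (corresponding to the trivial subgroup and $\langle r \rangle$). For the second sum, the condition $2^r \mid p$ with $p$ an odd prime forces $r = 0$, so the only term is $p/2^0 = p$, giving $\mathrm{Nil}(D_{2p}) = p + 2$. (The edge case $p = 2$ may be dismissed at once, since $D_4$ is abelian and Proposition \ref{prop 1} already yields $\mathfrak{J}(D_4) = 1$, so it is harmless for a statement about the limit.)

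Combining these, I would obtain
\begin{equation*}
\mathfrak{J}(D_{2p}) \;=\; \frac{p+2}{p+3} \;=\; 1 - \frac{1}{p+3},
\end{equation*}
and the limit $p \to \infty$ then gives $1$, as claimed.

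There is no real obstacle here; the entire proof is essentially a substitution into formulas that have already been established. The only mild subtlety is the parity remark: one must notice that the sum in the Nil formula collapses to a single term because $p$ is odd, which is what makes $\mathrm{Nil}(D_{2p})$ and $L(D_{2p})$ differ by a constant $1$ independent of $p$, forcing the ratio to tend to $1$.
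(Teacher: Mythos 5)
Your proposal is correct and follows essentially the same route as the paper: both compute $L(D_{2p}) = \tau(p)+\sigma(p) = p+3$ and $\mathrm{Nil}(D_{2p}) = p+2$, then take the limit of $\frac{p+2}{p+3}$. You merely make explicit (via the earlier Nil formula and the parity observation) the step the paper leaves implicit, which is a harmless refinement.
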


\begin{proof}
    In fact,
    \begin{align*}
    \mathfrak{J}(D_{2p}) = \frac{Nil(D_{2p})}{\tau(p) + \sigma(p)} &= \frac{p +2}{\tau(p) + \sigma(p)}\\ 
    &= \frac{p+2}{2 + \Big(\frac{p^{2}-1}{p-1} \Big)}\\
    &= \frac{p+2}{2 + ( p + 1 )}\\
    &= \frac{p}{2 + ( p + 1 )} + \frac{2}{2 + ( p + 1 )}.
\end{align*}

In this way there is

\begin{align*}
    \lim_{p \longrightarrow \infty }\mathfrak{J}(D_{2p}) = \lim_{p \longrightarrow \infty } \frac{p}{2 + ( p + 1 )} + \lim_{p \longrightarrow \infty } \frac{2}{2 + ( p + 1 )} = 1.
\end{align*}
\end{proof}

\begin{prop}
Let $G = D_{2^{2}p}$ be a finite dihedral group of order $4p$. If $p$ is prime, then

\begin{equation*}
    \lim_{p \longrightarrow \infty } \mathfrak{J}(G) = 1.
\end{equation*} 
\end{prop}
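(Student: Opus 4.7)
The plan is to mirror the calculations of the three preceding propositions: apply the formula $L(D_{2n}) = \tau(n) + \sigma(n)$ from [2] for the denominator, invoke the earlier theorem on $\operatorname{Nil}(D_{2n})$ for the numerator, and then pass to the limit. Since $D_{2^{2}p}$ has order $4p = 2n$, the relevant parameter throughout will be $n = 2p$.

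For the denominator, I would use multiplicativity of $\tau$ and $\sigma$ (valid because $\gcd(2,p) = 1$ for every odd prime $p$) to obtain $\tau(2p) + \sigma(2p) = 4 + 3(p+1)$, which is linear in $p$ with leading coefficient $3$. For the numerator, the powers of $2$ dividing $n = 2p$ are precisely $2^{0}$ and $2^{1}$, so the sum in the theorem collapses to $2p + p = 3p$; the numerator is therefore also linear in $p$ with leading coefficient $3$.

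Once both quantities are written in the form $3p + c$, I would split
\[
\mathfrak{J}(D_{4p}) \;=\; \frac{3p + c_{1}}{3p + c_{2}} \;=\; \frac{3p}{3p + c_{2}} + \frac{c_{1}}{3p + c_{2}},
\]
in exact analogy with the previous propositions, and take the limit term by term to obtain $\lim_{p \to \infty} \mathfrak{J}(G) = 1 + 0 = 1$.

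There is no substantive obstacle here; the argument is a direct template instantiation of the $n = p$, $n = 3p$, and $n = 6p$ cases already settled in the paper. The only point requiring a moment's care is notational: one must use $n = 2p$ (not $4p$) as the argument of $\tau$, $\sigma$, and the Nil formula, so that the convention $|D_{2n}| = 2n$ is respected and, in particular, so that only $r \in \{0, 1\}$ appears in the sum $\sum_{2^r \mid n} n/2^r$.
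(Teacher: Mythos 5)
Your proposal is correct and follows essentially the same route as the paper: both compute $\operatorname{Nil}(D_{4p}) = 3p+4$ (via the sum over $r \in \{0,1\}$ with $n = 2p$) and $L(D_{4p}) = \tau(2p)+\sigma(2p) = 3p+7$ using multiplicativity, then pass to the limit of $\frac{3p+4}{3p+7}$. The only cosmetic difference is that the paper writes the numerator as $\bigl(\tfrac{2^{2}p}{2}+4\bigr)+p$ rather than grouping it as $\tau(2p) + (2p + p)$.
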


\begin{proof}

     If  $|G| = 2^{2}p $, then  


\begin{align*}
     \mathfrak{J}(D_{2^{2}p}) = \frac{Nil(D_{2^{2}p})}{\tau(2p) + \sigma(2p)} &= \frac{ \Big(\frac{2^{2}p}{2} + 4 \Big) + p}{\tau(2p) + \sigma(2p)}\\
     &= \frac{ \Big(\frac{2^{2}p}{2} + 4 \Big) + p}{4 + \Big(\frac{2^{2}-1}{2-1} \Big) \Big(\frac{p^{2}-1}{p-1} \Big)}\\
     &= \frac{ \Big(\frac{2^{2}p}{2} + 4 \Big) + p}{4 + 3(p+1)}\\
     &= \frac{ \Big(2p + 4 \Big) + p}{4 + 3(p+1)} = \frac{3p + 4 }{ 3p+7}\\
\end{align*}   

therefore, $ \lim_{p \longrightarrow \infty }\mathfrak{J}(D_{2^2p}) = \lim_{p \longrightarrow \infty } \frac{3p + 4 }{ 3p+7} = 1.$

\end{proof}

Similar results can be found for finite dicyclic groups as shown below:

\begin{prop}

Let $G$ be a finite dicyclic group, then

    \begin{enumerate}
         \item{If $G = C_{p} \rtimes C_{4}$, then $\lim_{p \longrightarrow \infty} \mathfrak{J}(G) = 1$; } 
         \item{If $G = C_{p} \rtimes Q_{2^{n}}$, then $\lim_{p \longrightarrow \infty} \mathfrak{J}(G) = 1$;} 
          \item{If $G = C_{p^2} \rtimes C_{4}$, then $\lim_{p \longrightarrow \infty} \mathfrak{J}(G) = 1$;} 
           \item{If $G = C_{p^2} \rtimes Q_{8}$, then $\lim_{p \longrightarrow \infty} \mathfrak{J}(G) = 1$;} 
          \item{If $G = C_{q} \rtimes (C_{p}\rtimes C_{4})$, then $\lim_{p \longrightarrow \infty} \mathfrak{J}(G) = \frac{q}{q + 1}$, \text{where $p$ and $q$ are primes.}} 
    \end{enumerate}

\end{prop}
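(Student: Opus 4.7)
The plan is to apply, case by case, the same template used in the dihedral propositions above: compute $\text{Nil}(G)$ and $L(G)$ as explicit rational functions of $p$ (and of $q$ in case (5)), form the ratio $\mathfrak{J}(G)=\text{Nil}(G)/L(G)$, and pass to the limit.

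For items (1)--(4), I would first pin down the Sylow structure. Each of these groups has order of the form $2^{a}p^{b}$, so any nilpotent subgroup decomposes as the internal direct product of its Sylow $2$-subgroup and its Sylow $p$-subgroup. Since the semidirect action is nontrivial (these are Frobenius-type extensions, except for the central involution that may act trivially in the $Q_{2^{n}}$ cases), the nilpotent subgroups split into three families: the $p$-subgroups inside the normal factor (enumerated by the divisor lattice of $C_{p}$ or $C_{p^{2}}$), the $2$-subgroups contained in a single Sylow $2$-subgroup (counted by the subgroup lattice of $C_{4}$ or $Q_{2^{n}}$ multiplied by the number of Sylow $2$-subgroups, which is $1$ or $p$ by Sylow's theorem), and the nilpotent direct products that appear when a central involution commutes with the whole $C_{p^{k}}$. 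For $L(G)$ I would use the known total subgroup counts available for these metacyclic and quaternion-like extensions. After substitution, both $\text{Nil}(G)$ and $L(G)$ become linear in $p$ with equal leading coefficient, so the lower-order terms vanish in the limit and $\mathfrak{J}(G)\to 1$, exactly as in the $D_{2p}$ and $D_{2^{2}p}$ computations carried out above.

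The main obstacle is item (5), where $G=C_{q}\rtimes(C_{p}\rtimes C_{4})$ has three distinct prime divisors and therefore a richer nilpotent lattice: beyond the pure $q$-, $p$-, and $2$-subgroups, every $2$-element centralizing $C_{q}$ produces nilpotent direct products of the form $C_{q}\times H$, and these must be tracked carefully. The stated limit $q/(q+1)$, with $q$ held fixed and $p\to\infty$, reflects that the dominant $p$-linear term in $L(G)$ picks up a factor $q+1$ from the full subgroup lattice of $C_{q}$ combined with the Sylow $2$-lattice, while only $q$ of those combinations survive as nilpotent; after cancellation the constant $q/(q+1)$ emerges. Establishing (5) therefore reduces to an accurate enumeration of the centralizer of the action of $C_{p}\rtimes C_{4}$ on $C_{q}$ and of the resulting subgroup lattice, and this bookkeeping is the delicate step of the argument; items (1)--(4) are then routine specializations of the same method.
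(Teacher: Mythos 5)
Your overall template is the same as the paper's: obtain $\mathrm{Nil}(G)$ and $L(G)$ explicitly, form the quotient, and pass to the limit. The difference is in how the two ingredients are obtained. The paper does not perform the structural enumeration you outline: it quotes the closed formula $L(G)=\tau(2n)+\sigma(n)$ for a dicyclic group of order $4n$ from [8], [9], and takes the five formulas for $\mathrm{Nil}(G)$ (e.g.\ $\mathrm{Nil}(C_{p}\rtimes C_{4})=p+4$, $\mathrm{Nil}(C_{q}\rtimes(C_{p}\rtimes C_{4}))=qp+8$) from GAP computations via [5]; after that the limits are immediate. Your proposal replaces those citations with a Sylow-theoretic derivation, which is a legitimate and arguably more self-contained route, but as written it is a plan rather than a proof: you never actually produce the explicit values of $\mathrm{Nil}(G)$ or $L(G)$ in any of the five cases, so none of the limits is verified. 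The assertion that the leading coefficients of $\mathrm{Nil}(G)$ and $L(G)$ agree is exactly the content of items (1)--(4), and it is asserted rather than derived; for item (5) you explicitly defer the ``delicate bookkeeping'' that determines the coefficients $q$ and $q+1$, which is the whole substance of that case (concretely, $L(G)=\tau(2pq)+\sigma(pq)=8+(p+1)(q+1)$ versus $\mathrm{Nil}(G)=qp+8$). Two smaller points: in cases (3) and (4) the counts are quadratic in $p$ (e.g.\ $L(C_{p^{2}}\rtimes C_{4})=p^{2}+p+7$), not linear as you state, though the equal-leading-coefficient argument still goes through; and your heuristic for where the factor $q+1$ in case (5) comes from is consistent with the expansion $\sigma(pq)=(p+1)(q+1)$, so the intuition is right even though the computation is missing.
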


\begin{proof}
The demonstration of this proposition follows as in the case of dihedral groups. 
The most considerable changes follow in the denominator of the expressions where $L(G) = \tau(2n) + \sigma(n)$ if $|G| = 4n$, by  [8], [9]. Using [5] it was possible to determine for the cases above a expression for $Nil(G)$:

\begin{itemize}
     \item{$Nil(C_{p} \rtimes C_{4}) = p +4$, for $p \geq 3$;}
     \item{$Nil(C_{p} \rtimes Q_{2^{n}}) = (2^{n-1} -1 )p + 2n$, for $p \geq 3$;}
     \item{$Nil(C_{p^2} \rtimes C_{4}) = p^2 + 6$, for $p \geq 3$;}
     \item{$Nil(C_{p^2} \rtimes Q_{8}) = 3p^2 + 9$, for $p \geq 3$;}
     \item{$Nil(C_{q} \rtimes (C_{p}\rtimes C_{4})) = qp + 8$, for $p \geq 3$.}
\end{itemize}

By making the necessary substitutions and calculating the limits, the desired results are obtained.
\end{proof}


\begin{ex}
Consider $n = 10$ in $C_{p} \rtimes Q_{2^{n}}$, then
\begin{align*}
    \lim_{p \longrightarrow \infty }\mathfrak{J}(C_{p} \rtimes Q_{1024}) = \lim_{p \longrightarrow \infty } \frac{511p + 20 }{ \tau(512p) + \sigma(256p)} = \lim_{p \longrightarrow \infty } \frac{511p + 20 }{ 531 + 511p} = 1.
\end{align*}
\end{ex}

\begin{ex}
Consider $q = 3$ in $C_{q} \rtimes (C_{p}\rtimes C_{4})$, then
\begin{align*}
    \lim_{p \longrightarrow \infty }\mathfrak{J}(C_{3} \rtimes (C_{p}\rtimes C_{4})) = \lim_{p \longrightarrow \infty } \frac{3p + 8 }{ \tau(6p) + \sigma(3p)} = \lim_{p \longrightarrow \infty } \frac{3p + 8 }{ 12 + 4p} = \frac{3}{4}.
\end{align*}

\end{ex}








In this sense we can consider another family of non-nilpotent supersoluble groups. In particular, we will consider a family that has the order configuration as $pq^{n}$, where p and q are primes with the following configuration: $C_{p} \rtimes C_{q^{n}}$.

\begin{theorem} \label{Theorem 2.10}
    
Let $G =C_{p} \rtimes C_{q^{n}} $, then for $n \geq 2$
  \[
\mathfrak{J}(C_{p} \rtimes C_{q^{n}}) = \frac{2(n + 1) + p - 2}{2(n + 1) + p - 1}.
\]
\end{theorem}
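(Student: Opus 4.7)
The plan is to enumerate the subgroups of $G = C_p \rtimes C_{q^n}$ by order, exploit the constraint that $|G| = pq^n$ has only two prime divisors together with $C_p \trianglelefteq G$, and then form the quotient $\text{Nil}(G)/L(G)$. Every subgroup has order $p^a q^b$ with $a\in\{0,1\}$ and $0\le b\le n$, so there are only a handful of order classes to handle.

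I would begin by pinning down the action $\varphi : C_{q^n} \to \mathrm{Aut}(C_p)\cong C_{p-1}$ that yields the claimed formula: take $\mathrm{Im}(\varphi)$ to have order exactly $q$, so that $\ker(\varphi)\cong C_{q^{n-1}}$. A direct computation inside the semidirect product shows that $Z(G) = \{1\}\times \ker(\varphi) \cong C_{q^{n-1}}$. By Sylow's theorems $n_q\mid p$ and $n_q\equiv 1\pmod{q}$, and non-triviality of $\varphi$ rules out $n_q=1$, so $n_q=p$. Because $Z(G)$ is central it is contained in every Sylow $q$-subgroup; in a cyclic group of order $q^n$ a proper subgroup has order at most $q^{n-1}$, so the intersection of any two distinct Sylow $q$-subgroups, which already contains $Z(G)$ of order $q^{n-1}$, must equal $Z(G)$.

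With this structural fact in hand, the counting is mechanical. For $1\le k\le n-1$, the unique subgroup of order $q^k$ in each Sylow $q$-subgroup sits inside $Z(G)$, so all these coincide and contribute $n-1$ subgroups. For $k=n$ the $p$ Sylow $q$-subgroups are pairwise distinct and contribute $p$ subgroups. Subgroups containing the normal $C_p$ correspond bijectively via the correspondence theorem to subgroups of $G/C_p\cong C_{q^n}$, giving exactly one subgroup of order $pq^k$ for each $k=0,1,\ldots,n$. Adding the trivial subgroup yields
\[
L(G) = 1 + 1 + (n-1) + p + n = 2n + p + 1.
\]
For $\text{Nil}(G)$ I would show that every proper subgroup is nilpotent: the $p$- and $q$-subgroups are trivially so, and any subgroup of order $pq^k$ with $k\le n-1$ has its $q$-part inside $\ker(\varphi)$ and therefore equals $C_p\times C_{q^k}$, which is abelian. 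Only $G$ itself is non-nilpotent, since $\varphi$ is non-trivial, so $\text{Nil}(G)=L(G)-1=2n+p$, giving
\[
\mathfrak{J}(G) = \frac{2n+p}{2n+p+1} = \frac{2(n+1)+p-2}{2(n+1)+p-1}.
\]

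The main obstacle is establishing rigorously that any two distinct Sylow $q$-subgroups meet in exactly $Z(G)=C_{q^{n-1}}$; once this is in place, the rest reduces to Sylow theory and the correspondence theorem. A secondary subtlety is the choice of action: the formula is valid precisely when $|\mathrm{Im}(\varphi)|=q$, and this assumption should be made explicit at the start, since larger images of $\varphi$ (possible when $q^2\mid p-1$) change the $q$-subgroup count and hence both $L(G)$ and $\text{Nil}(G)$.
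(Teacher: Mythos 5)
Your proposal is correct and is in fact considerably more rigorous than the paper's own argument, while following the same broad strategy of counting subgroups by order. The paper only tallies $L(G)$: it counts $2(n+1)$ subgroups "associated with the divisors of $|G|$" and then attributes the remaining $p-1$ subgroups to "$p-1$ proper nontrivial subgroups of $C_p$" --- a claim that is false as stated, since $C_p$ with $p$ prime has no proper nontrivial subgroups; the paper also never verifies which subgroups are nilpotent, i.e.\ it silently assumes $\mathrm{Nil}(G)=L(G)-1$. Your Sylow-theoretic analysis repairs both defects: you correctly identify the $p-1$ "extra" subgroups as the additional conjugate Sylow $q$-subgroups (via $n_q=p$ and the fact that any two distinct Sylow $q$-subgroups, being cyclic and both containing the normal subgroup $Z(G)\cong C_{q^{n-1}}$, intersect exactly in $Z(G)$), and you explicitly prove that every proper subgroup is nilpotent because its $q$-part of order at most $q^{n-1}$ lies in $\ker\varphi$ and hence centralizes $C_p$. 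Your closing observation that the formula presupposes $|\mathrm{Im}(\varphi)|=q$ is also a genuine improvement: the theorem as stated does not specify the action, and for $q^2\mid p-1$ a larger image would alter both counts. In short, your route buys a complete and correct proof of both the numerator and the denominator, where the paper offers only a heuristic for the denominator.
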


\begin{proof}
    For each divisor \( d \) of \( |G| = pq^n \), there exists at least one subgroup of order \( d \). In particular, the group \( G \) contains \( n+1 \) subgroups corresponding to the divisors of \( q^n \), as \( q^n \) has exactly \( n+1 \) divisors. Additionally, there is exactly one subgroup of order \( p \). Together, these subgroups account for a total of \( 2(n + 1) \) subgroups directly associated with the divisors of \( |G| \). However, the semidirect product structure of \( G \) introduces further complexity, resulting in additional subgroups. Specifically, for each subgroup \( H \) of \( C_p \) (a cyclic group), subgroups can be formed by combining \( H \) with subgroups of \( C_{q^n} \). Since \( C_p \) is cyclic, it has \( p - 1 \) proper nontrivial subgroups, along with the trivial subgroup and \( C_p \) itself. These \( p - 1 \) proper subgroups contribute additional subgroups of \( G \) beyond those accounted for by \( 2(n + 1) \). Thus, the total number of subgroups in \( G \) is given by \( 2(n + 1) + (p - 1) \), reflecting both the direct contributions from the divisors of \( |G| \) and the additional subgroups arising from the semidirect product structure.
\end{proof}

\begin{ex}
Take $n = q = 2$ and $p = 11$ in $C_{p} \rtimes C_{q^n}$, then
\begin{align*}
    \mathfrak{J}(C_{11} \rtimes C_{4}) = \frac{2(2 + 1) + 11 - 2}{2(2 + 1) + 11 - 1} = \frac{15}{16}.
\end{align*}
\end{ex}

\begin{ex}
Take $n = 4, q = 3$ and $p = 5$ in $C_{p} \rtimes C_{q^n}$, then
\begin{align*}
    \mathfrak{J}(C_{5} \rtimes C_{81}) = \frac{2(4 + 1) + 5 - 2}{2(4 + 1) + 5 - 1} = \frac{13}{14}.
\end{align*}
\end{ex}

In [10] the Cyclicity degrees were defined. This metric can be correlated with the function under study considering that cyclic subgroups will always be nilpotent, thus it is possible to define the order relationship:

\begin{equation*}
    cdeg(G) \leq \mathfrak{J}(G).
\end{equation*}

Note that a family of groups considered in Theorem \ref{Theorem 2.10} satisfies the conditions for a group whose degree of cyclicity is determined by Theorem 2.1 in [10]. With this, we can obtain the following result:

\begin{cor}
    Let $G =C_{p} \rtimes C_{q^{n}} $, then for $n \geq 2$, then

\begin{equation*}
        cdeg(G) = \mathfrak{J}(G). 
\end{equation*}

\end{cor}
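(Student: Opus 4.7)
The plan is to upgrade the already-noted inequality $cdeg(G)\le\mathfrak{J}(G)$ to an equality. Since every cyclic subgroup is nilpotent, that inequality is automatic, so the whole corollary reduces to showing the reverse: every nilpotent subgroup of $G=C_{p}\rtimes C_{q^{n}}$ is in fact cyclic. Once this is established, the numerator counts $Nil(G)$ and $Cyc(G)$ coincide, and dividing both by $L(G)$ forces $\mathfrak{J}(G)=cdeg(G)$.

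First I would invoke the classical structure theorem: a finite group $H$ is nilpotent if and only if it is the internal direct product of its Sylow subgroups. Applied to an arbitrary nilpotent subgroup $H\le G$, this reduces the problem to identifying those Sylow subgroups. By Lagrange, $|H|$ divides $pq^{n}$, so only the primes $p$ and $q$ can occur. The Sylow $p$-subgroup of $H$ sits inside some Sylow $p$-subgroup of $G$; since $|G|=pq^{n}$, any such Sylow is just $C_{p}$, so the $p$-part of $H$ is either trivial or equal to $C_{p}$. The Sylow $q$-subgroup of $H$ sits inside a Sylow $q$-subgroup of $G$, which is a conjugate of the cyclic complement $C_{q^{n}}$; subgroups of a cyclic group are cyclic, so the $q$-part of $H$ is a cyclic $q$-group $C_{q^{k}}$ with $0\le k\le n$.

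Assembling the pieces, $H$ must be isomorphic to one of $C_{q^{k}}$, $C_{p}$, or $C_{p}\times C_{q^{k}}$, and in the last case coprimality yields $C_{p}\times C_{q^{k}}\cong C_{pq^{k}}$ by the Chinese Remainder isomorphism. Hence $H$ is cyclic in every case, so $Nil(G)=Cyc(G)$ and therefore $\mathfrak{J}(G)=cdeg(G)$. The common value equals the ratio $\tfrac{2(n+1)+p-2}{2(n+1)+p-1}$ already computed in Theorem~\ref{Theorem 2.10}. I do not foresee any real obstacle here: the entire argument hinges on the cyclicity of the Sylow $q$-subgroups of $G$, which is precisely what the hypothesis that $G$ is a semidirect product with cyclic complement $C_{q^{n}}$ delivers for free.
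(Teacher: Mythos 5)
Your proof is correct, but it takes a genuinely different route from the paper. The paper obtains the corollary by appealing to Theorem~2.1 of [10], which supplies an explicit formula for $cdeg(G)$ for this family, and then observing that it matches the value of $\mathfrak{J}(G)$ computed in Theorem~\ref{Theorem 2.10}; no structural argument is given. You instead prove the stronger structural fact that in $G = C_{p}\rtimes C_{q^{n}}$ every nilpotent subgroup is cyclic: a nilpotent $H\le G$ decomposes as the direct product of its Sylow subgroups, its $q$-part lies in a conjugate of the cyclic complement $C_{q^{n}}$ and is therefore cyclic, its $p$-part has order $1$ or $p$, and the coprimality of $p$ and $q$ makes the product cyclic. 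This gives $\mathrm{Nil}(G)=\mathrm{Cyc}(G)$ and hence $cdeg(G)=\mathfrak{J}(G)$ without invoking either counting formula. Your argument is more self-contained and in fact more general --- it works for any $n\ge 1$ and does not depend on the particular subgroup count of Theorem~\ref{Theorem 2.10} --- whereas the paper's route is shorter on the page but rests entirely on the external formula from [10]. The only presentational caveat is that the final sentence identifying the common value with $\tfrac{2(n+1)+p-2}{2(n+1)+p-1}$ imports Theorem~\ref{Theorem 2.10}, which your structural argument does not otherwise need.
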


The function apparently does not present a well-defined behavior in terms of group quotients, in this sense it is not possible to immediately define an order relationship of the type $\mathfrak{J}(G/H) \leq \mathfrak{J}( G) $ or $\mathfrak{J}(G) \leq \mathfrak{J}(G/H)$. But considering some specific subgroups it is possible to determine order relations in terms of $\mathfrak{J}$.

\begin{prop}

Let $G$ be a finite group, then

    \begin{enumerate}
         \item{$\mathfrak{J}(G) \leq \mathfrak{J}(Z(G))$}; 
         \item{$\mathfrak{J}(G) \leq \mathfrak{J}(\Phi(G))$}; 
         \item{$\mathfrak{J}(G) \leq \mathfrak{J}(F(G))$}. 
    \end{enumerate}

\end{prop}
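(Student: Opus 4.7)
The plan is to reduce all three inequalities to the same elementary observation: the subgroups $Z(G)$, $\Phi(G)$, and $F(G)$ are all nilpotent, and $\mathfrak{J}$ is bounded above by $1$ on every finite group. The inequalities then become instances of $\mathfrak{J}(G) \leq 1 = \mathfrak{J}(H)$.

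First I would record the universal upper bound. By definition $\mathrm{Nil}(G) \leq L(G)$ because every nilpotent subgroup is, in particular, a subgroup, so $\mathfrak{J}(G) \in (0,1]$ for every finite group $G$. Next, I would invoke the forward direction of Proposition \ref{prop 1}: whenever $H$ is a nilpotent finite group, every subgroup of $H$ is nilpotent, hence $\mathrm{Nil}(H) = L(H)$ and $\mathfrak{J}(H) = 1$. Combining these two facts, to prove each of (1), (2), (3) it suffices to verify that $Z(G)$, $\Phi(G)$, and $F(G)$ are nilpotent.

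I would then dispatch the three cases in order. For (1), $Z(G)$ is abelian, hence nilpotent, so $\mathfrak{J}(Z(G)) = 1 \geq \mathfrak{J}(G)$. For (2), I would appeal to the classical theorem of Frattini stating that $\Phi(G)$ is nilpotent for every finite group $G$; applying Proposition \ref{prop 1} again gives $\mathfrak{J}(\Phi(G)) = 1$. For (3), the Fitting subgroup $F(G)$ is by definition the (unique) largest normal nilpotent subgroup of $G$, so again $\mathfrak{J}(F(G)) = 1$.

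The proof therefore has no genuine obstacle: the only substantive ingredients are the nilpotency of $\Phi(G)$ and of $F(G)$, both of which are standard. The main point worth emphasizing in the write-up is that these three inequalities are in fact sharp only trivially — namely, equality holds exactly when $G$ itself is nilpotent, in which case $\mathfrak{J}(G) = 1$ by Proposition \ref{prop 1}. Any nilpotent subgroup $H \leq G$ would in fact satisfy $\mathfrak{J}(G) \leq \mathfrak{J}(H)$ by the same argument, so the statement is really a corollary of the general principle that $\mathfrak{J}$ attains its maximum value $1$ precisely on the nilpotent subgroups.
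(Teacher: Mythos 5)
Your proof is correct and follows exactly the paper's argument: the paper likewise disposes of all three cases at once by noting that $Z(G)$, $\Phi(G)$, and $F(G)$ are always nilpotent, so that $\mathfrak{J}$ equals $1$ on each of them while $\mathfrak{J}(G) \leq 1$. Your write-up simply makes explicit the two ingredients (the universal bound $\mathfrak{J} \leq 1$ and the forward direction of Proposition~\ref{prop 1}) that the paper leaves implicit.
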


where $Z(G)$ is the center of $G$, $\Phi(G)$ is the Frattini subgroup of $G$, and $F(G)$ is the Fitting subgroup of $G$. The proof immediately follows by recalling that these subgroups are always nilpotent. We will now present the main result of this work.

\begin{theorem}
 Let $D_{2^{m}p}$ be a subfamily of $D_{2n}$, where $m \geq 2$. Then the set $ \{ \prod_{n \in P} \mathfrak{J}(D_{2^{m}p_{n}})| P \subset \mathbb{N} \setminus \{ 0 \}, |P| < \infty, \, \text{and} \, \, p_{n} \, \text{is the nth prime number}, \forall n \in P \}$ is dense in $(0, 1]$.
\end{theorem}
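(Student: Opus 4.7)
The plan is to combine the closed form for $L(D_{2n})=\tau(n)+\sigma(n)$ with the expression for $\mathrm{Nil}(D_{2n})$ from the earlier theorem, and then reduce the density question to a classical lemma about finite subproducts of a sequence tending to $1$.

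First I will compute $\mathfrak{J}(D_{2^m p})$ explicitly for a fixed $m\ge 2$ and an odd prime $p$. Setting $n=2^{m-1}p$, the divisors of $n$ divisible only by $2$ are $2^0,\dots,2^{m-1}$, so
\[
\mathrm{Nil}(D_{2^m p}) \;=\; \tau(2^{m-1}p)+\sum_{r=0}^{m-1}\frac{2^{m-1}p}{2^r} \;=\; 2m+(2^m-1)p,
\]
while $L(D_{2^m p})=\tau(2^{m-1}p)+\sigma(2^{m-1}p)=2m+(2^m-1)(p+1)$. Writing $a=2^m-1$, this gives
\[
\mathfrak{J}(D_{2^m p}) \;=\; \frac{ap+2m}{a(p+1)+2m}, \qquad 1-\mathfrak{J}(D_{2^m p}) \;=\; \frac{a}{a(p+1)+2m} \;\sim\;\frac{1}{p}.
\]

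Next, I will observe that the factor at $p_n=2$ equals $1$ (since $D_{2^{m+1}}$ is a $2$-group, hence nilpotent), so only primes $p_n\ge 3$ matter. From the asymptotic above, $1-\mathfrak{J}(D_{2^m p_n})\sim 1/p_n$, and Mertens' theorem $\sum_n 1/p_n=\infty$ yields
\[
\sum_{n\ge 1} \bigl(1-\mathfrak{J}(D_{2^m p_n})\bigr) \;=\; \infty.
\]
Equivalently, setting $y_n=-\log \mathfrak{J}(D_{2^m p_n})\ge 0$, one has $y_n\to 0$ and $\sum y_n=\infty$.

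I will then invoke the following elementary density lemma: if $(y_n)_{n\ge 1}\subset[0,\infty)$ satisfies $y_n\to 0$ and $\sum y_n=\infty$, then the set of finite subsums $\bigl\{\sum_{n\in P}y_n : P\subset\mathbb N,\ |P|<\infty\bigr\}$ is dense in $[0,\infty)$. The proof is a greedy argument: given a target $s\ge 0$ and $\varepsilon>0$, pick $N$ with $y_n<\varepsilon$ for $n\ge N$, and add indices from $\{N,N+1,\dots\}$ in order until the partial sum first exceeds $s$; because the added term is at most $\varepsilon$, the overshoot is at most $\varepsilon$. Exponentiating, the set of finite subproducts of $\mathfrak{J}(D_{2^m p_n})$ is dense in $(0,1]$, which is exactly the claimed statement.

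The main obstacle is really the bookkeeping in the first step, namely verifying that the exponent of $p$ in numerator and denominator cancel in such a way that $1-\mathfrak{J}(D_{2^m p_n})$ is of exact order $1/p_n$; once this is in hand, the divergence of $\sum 1/p_n$ and the greedy lemma do the rest. A secondary subtlety is that the density lemma requires only $y_n\to 0$ and $\sum y_n=\infty$, not monotonicity, so no further hypothesis on the prime-indexed sequence is needed.
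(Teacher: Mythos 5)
Your proposal is correct and follows essentially the same route as the paper: compute $\mathfrak{J}(D_{2^m p})$ in closed form, pass to $-\log$, verify that the resulting nonnegative sequence tends to $0$ while its series diverges, and invoke the density of finite subsums in $[0,\infty)$ before exponentiating back to $(0,1]$. The only differences are cosmetic — you justify the divergence directly from $\sum_n 1/p_n=\infty$ rather than via $p_n\sim n\ln n$ and comparison with $\sum 1/(n\ln n)$, you prove the subsum-density lemma by a greedy argument instead of citing it, and you read the order of $D_{2^m p}$ as $2^m p$ (so $n=2^{m-1}p$) where the paper's computation uses $n=2^m p$, none of which affects the argument.
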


\begin{proof}
  
In [6] the Lemma 4.1, states that for a given positive sequence $(x_{n})_{n \geq 1 }$ of real numbers such that $ \lim_{n \longrightarrow \infty} x_{n}=0$ and $ \sum_{n=1}^{\infty} x_{n}$ is divergent, so the set containing the sums of all subsequences of $(x_{n})$ is dense in $[0, \infty)$. Let's show that these two conditions are valid for the sequence $x_n = -\ln(\mathfrak{J}(D_{2^{m}p_{n}})), \forall n \geq 2$. Note that $x_{n} > 0, \forall n \geq 2$. This way we have:

\begin{align*}
   \lim_{n \longrightarrow \infty} x_{n} =   \lim_{n \longrightarrow \infty} -\ln(\mathfrak{J}(D_{2^{m}p_{n}})) &= \lim_{n \longrightarrow \infty} -\ln \left(    \frac{\tau(2^{m}p_{n}) \;+\; \sum_{\substack{2^r \mid 2^{m}p_{n} \\ r \geq 0}} \frac{2^{m}p_{n}}{2^r}}{\tau(2^{m}p_{n}) + \sigma(2^{m}p_{n})} \right)\\ 
   &= \lim_{n \longrightarrow \infty} -\ln \left(  \frac{2(m+1) + p_{n}(2^{m+1} - 1)}{2(m+1)+(2^{m+1} - 1)(1 + p_{n})}   \right)\\
   &=0.
\end{align*}

Now consider the series $\sum_{n}^{\infty} -\ln(\mathfrak{J}(D_{2^{m}p_{n}}))$. When \(p_n\) is large, the logarithm argument can be expanded in Taylor series around 1. Let \(x = \frac{1}{p_n}\), with \(x \to 0\) . So, it is possible to approximate:

\[
\frac{2(m+1) + p_n(2^{m+1} - 1)}{2(m+1) + (2^{m+1} - 1)(1 + p_n)} \approx 1 - \frac{2(m+1)}{p_n(2^{m+1} - 1)}.
\]

Using logarithm expansion for values close to 1 (\(\ln(1-x) \approx -x\)):

\[
-\ln \left( 1 - \frac{2(m+1)}{p_n(2^{m+1} - 1)} \right) \approx \frac{2(m+1)}{p_n(2^{m+1} - 1)}.
\]

Thus, the general term \(a_n\) behaves asymptotically as:
\[
a_n \sim \frac{2(m+1)}{p_n(2^{m+1} - 1)}.
\]

As prime numbers \(p_n\) grow approximately as \(p_n \sim n \ln(n)\). Therefore:
\[
a_n \sim \frac{2(m+1)}{(2^{m+1} - 1)n \ln(n)}.
\]

Now, we compare the given series with the reference series:

\[
\sum_{n=1}^\infty \frac{1}{n \ln(n)}.
\]

This reference series is divergent (see [1]). As \(a_n\) has the same asymptotic order as the divergent series, it is possible to conclude that the original series also diverges. By Lemma 4.1 in [6], the set of sums of all finite subsequences of \((x_n)\) is dense in \([0, \infty)\). In terms of the new sequence, this implies that

   \[
   \overline{\left\{-\ln\prod_{n \in P} \frac{2(m+1) + p_n(2^{m+1} - 1)}{2(m+1) + (2^{m+1} - 1)(1 + p_n)} \;\middle|\; P \subset \mathbb{N} \setminus \{0\}, |P| < \infty \right\}} = [0, \infty).
   \]

The exponential function \(e^{-x}\) is continuous, which preserves the density in the set. Then,

   \[
   \overline{\left\{\prod_{n \in P} \frac{2(m+1) + p_n(2^{m+1} - 1)}{2(m+1) + (2^{m+1} - 1)(1 + p_n)} \;\middle|\; P \subset \mathbb{N} \setminus \{0\}, |P| < \infty \right\}} = (0, 1].
   \]

   Follows,

   \[
    \prod_{n \in P} \frac{\tau(2^m p_n) + \sum_{\substack{2^r \mid 2^m p_n \\ r \geq 0}} \frac{2^m p_n}{2^r}}{\tau(2^m p_n) + \sigma(2^m p_n)} = \prod_{n \in P} \mathfrak{J}(D_{2^m p_n}),
   \]
   
follows the desired result.

\end{proof}
\newpage

\subsection{Further research}

\quad \, It would be interesting in future work to evaluate relationships between the function $\mathfrak{J}$ and other functions defined based on quotients, such as the degree of normality $(ndeg)$. Furthermore, it would be interesting to study relationships between the function $\mathfrak{J}$ and those presented in [4] and [6].

Furthermore, based on the function \(\mathfrak{J}(G)\), it was possible to formulate the following conjecture:

\begin{conjecture}
Let \( n \in \mathbb{N} \) be an even number with \( n \geq 2 \), so that \( n \) can be written as \( n = 2k \), where \( k \in \mathbb{N} \) and \( k \geq 1 \). We define the following arithmetic functions:
\begin{itemize}
\item \(\tau(n) = \sum_{d \mid n} 1\), the number of positive divisors of \( n \);
\item \(\sigma(n) = \sum_{d \mid n} d\), the sum of the positive divisors of \( n \); \item \(S_2(n) = \sum_{\substack{r \in \mathbb{N}_0 \\ 2^r \mid n}} \frac{n}{2^r}\), the sum of the successive divisions of \( n \) by the powers of 2 that divide it. \end{itemize}
With these functions, we define the random variable \(\mathfrak{J}: \mathbb{N} \rightarrow (0,1]\) by:

\[
\mathfrak{J}(D_n) = \frac{\tau(n) + S_2(n)}{\tau(n) + \sigma(n)}.
\]

Consider a random sample with replacement formed by the values \(\mathfrak{J}(D_2), \mathfrak{J}(D_4), \dots, \mathfrak{J}(D_n)\), with \( n \) sufficiently large. Let \(\mu\) be the mean and \(\sigma\) be the standard deviation of the distribution of \(\mathfrak{J}(D_n)\). We define the sample mean \(\bar{X}_n\) as:

\[
\bar{X}_n = \frac{1}{n} \sum_{i=1}^n X_i,
\]

where \(X_i\) are independent and identically distributed random variables with the same distribution as \(\mathfrak{J}(D_n)\). The normalization of the sample mean is given by:

\[
Z_n = \frac{\bar{X}_n - \mu}{\sigma / \sqrt{n}}.
\]

By the Central Limit Theorem, \(Z_n\) converges in distribution to a standard normal \(\mathcal{N}(0,1)\) when \( n \to \infty \), since the support of \(\mathfrak{J}(n)\) is contained in the finite interval \((0,1]\), ensuring that all its moments, including the variance, are finite. Thus, we conclude that the distribution of \(\mathfrak{J}(n)\) is asymptotically normal.

\end{conjecture}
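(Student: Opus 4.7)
The plan is to read the conjecture as a triangular-array statement, since the index $n$ simultaneously controls the pool of dihedral groups and the sample size. For each even $n = 2k$, let $F_n$ denote the uniform probability measure on $\{\mathfrak{J}(D_{2j}) : 1 \leq j \leq k\} \subset (0,1]$, and let $X_1^{(n)}, \dots, X_n^{(n)}$ be i.i.d.\ draws from $F_n$. Because $F_n$ is supported in the bounded interval $(0,1]$, its mean $\mu_n$ and variance $\sigma_n^2$ are automatically finite, so
\[
Z_n = \frac{\bar X_n - \mu_n}{\sigma_n / \sqrt{n}}
\]
is well defined for every $n$, and the conjecture asks that $Z_n$ converges in distribution to $\mathcal{N}(0,1)$.

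The first step is to prove weak convergence of the empirical distributions $(F_n)$ to some limit law $F_\infty$ on $(0,1]$, with limiting mean $\mu_\infty$ and strictly positive limiting variance $\sigma_\infty^2$. The main input is the explicit formula
\[
\mathfrak{J}(D_{2k}) = \frac{\tau(2k) + S_2(2k)}{\tau(2k) + \sigma(2k)},
\]
combined with classical results on the normal order of the divisor and divisor-sum functions (Hardy--Ramanujan, Erd\H{o}s--Kac) together with a stratification of $k$ by its $2$-adic valuation, which is what controls $S_2(2k)$. Once $\mu_n \to \mu_\infty$ and $\sigma_n \to \sigma_\infty > 0$ are established, Slutsky's theorem lets one replace $(\mu_n,\sigma_n)$ by $(\mu_\infty,\sigma_\infty)$ in the normalization without affecting the limit.

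The second step is to verify the Lindeberg condition for the row-wise i.i.d.\ array $(X_i^{(n)})_{1 \leq i \leq n}$, and here boundedness makes the check essentially automatic: for any $\varepsilon > 0$ and any $n$ with $\varepsilon \sigma_n \sqrt{n} > 1$, the truncation indicator $\mathbf{1}\{|X_i^{(n)} - \mu_n| > \varepsilon \sigma_n \sqrt{n}\}$ vanishes identically, so the Lindeberg sum is zero. The Lindeberg--Feller theorem then produces the convergence $Z_n \Rightarrow \mathcal{N}(0,1)$.

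The hard part is clearly the first step: proving that the measures $F_n$ actually converge and, crucially, that $\sigma_\infty^2 > 0$ rather than collapsing to a Dirac mass. The statement of the conjecture implicitly posits fixed $\mu$ and $\sigma$, but without such a limit theorem the assertion concerns a genuinely changing row distribution, which is \emph{not} a direct corollary of the classical CLT. A safe fallback, should the limit law prove elusive, is to decouple the two roles of $n$: fix the dihedral pool size $n$, let the sample size $N \to \infty$ separately, and invoke Lindeberg--L\'evy for each fixed $n$, obtaining $(\bar X_N - \mu_n)/(\sigma_n/\sqrt{N}) \Rightarrow \mathcal{N}(0,1)$ as $N \to \infty$ provided only that $\sigma_n > 0$; this is presumably the interpretation under which the computational simulations mentioned in the abstract were actually performed.
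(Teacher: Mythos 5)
The paper does not actually prove this statement: it appears only as a conjecture, supported empirically by simulations of $\mathfrak{J}(D_2),\dots,\mathfrak{J}(D_{10^{7}})$, histograms with normal overlays, Q--Q plots, and Kolmogorov--Smirnov tests. So there is no proof in the paper to match your proposal against, and your diagnosis is itself the most valuable contribution here: you are right that, as written, the conjecture uses the single index $n$ both for the size of the pool of dihedral groups and for the number of i.i.d.\ draws, so the row distribution changes with $n$ and the claim is a triangular-array statement, \emph{not} a direct corollary of the classical CLT as the paper's phrasing (``By the Central Limit Theorem\dots'') suggests. Your Lindeberg verification in step two is correct and indeed trivial from boundedness, and your fallback reading (fix the pool, let the sample size grow separately) is almost certainly the regime the simulations actually probe.

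The gap is the one you concede yourself: step one is a plan, not a proof. To close it you need (i) weak convergence of the uniform measures $F_n$ on $\{\mathfrak{J}(D_{2j}) : j \le k\}$ to a limit law and (ii) nondegeneracy of that limit. For (i), note that for large even $m$ one has $\tau(m) = o(\sigma(m))$ and $S_2(m) = m\bigl(2 - 2^{-v_2(m)}\bigr)$, so $\mathfrak{J}(D_m) = \bigl(2 - 2^{-v_2(m)}\bigr)\big/\bigl(\sigma(m)/m\bigr) + o(1)$; the relevant classical input is therefore not Hardy--Ramanujan or Erd\H{o}s--Kac (which control $\omega$ and $\tau$, asymptotically irrelevant here) but Davenport's theorem that $\sigma(m)/m$ possesses a continuous limiting distribution function, combined with your stratification by the $2$-adic valuation, whose densities are explicit. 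That would yield $F_n \Rightarrow F_\infty$ with $F_\infty$ continuous and non-constant, hence $\sigma_\infty^2 > 0$, and Slutsky plus Lindeberg--Feller would finish as you describe. Until that analytic step is written out, your proposal is a sound strategy with its central lemma unproven --- though it already goes further toward an actual proof than the paper, which offers only computational evidence.
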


Considering the presented conjecture, a robust simulation was performed using a data set that involves the proportion of the first 5 million orders of the dihedral groups, that is, from \(\mathfrak{J}(D_2)\) to \(\mathfrak{J}(D_{10\,000\,000})\). For this analysis, 1,000 random selections with replacement were performed, covering different subset sizes: 30; 500; 1,000; 100,000; 500,000; 1,000,000; 2,500,000 and 5,000,000 elements.

To evaluate the adherence of the sampling distribution of the variable \(\mathfrak{J}(n)\) to the standard normal distribution, illustrative graphs will be presented. These graphs consist of histograms with the superimposition of the theoretical normal density curve and Q-Q (Quantile-Quantile) plots. The histogram allows observing the shape of the distribution of the standardized sample means, while the theoretical normal density curve facilitates visual comparison with a standard normal distribution. In turn, the Q-Q plot compares the sample quantiles with the theoretical quantiles of the standard normal. The proximity of the points to the red line in the Q-Q plot indicates that the distribution of the standardized sample means is aligned with the normal distribution.

\begin{figure}[H]
    \centering
    \begin{minipage}[b]{0.47\linewidth}
        \centering
        \includegraphics[width=\linewidth]{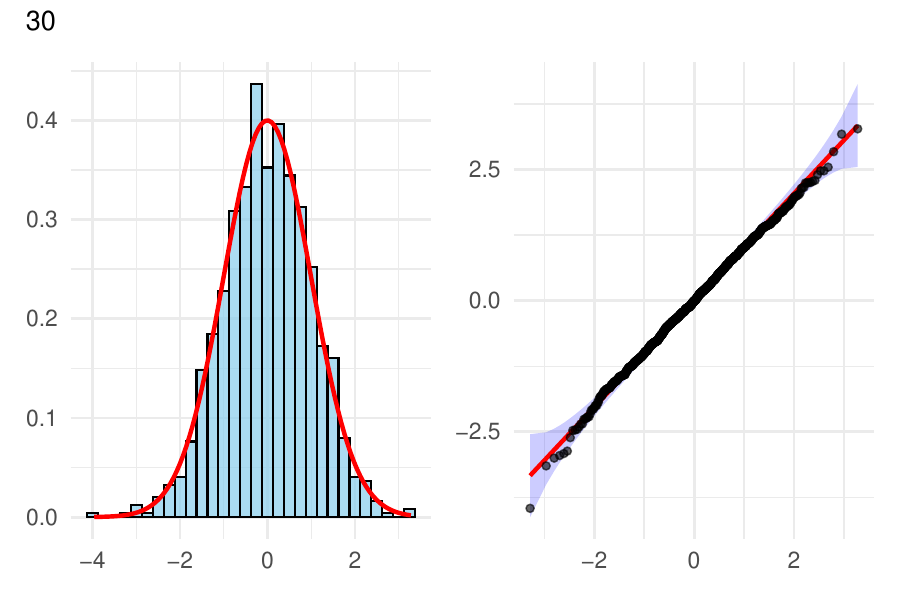}
        \caption{Histograms of simulated samples with overlay of the density curve of the standard normal distribution and Q-Q plot for sample size 30.}
        \label{fig:plot1}
    \end{minipage}
    \hfill
    \begin{minipage}[b]{0.47\linewidth}
        \centering
        \includegraphics[width=\linewidth]{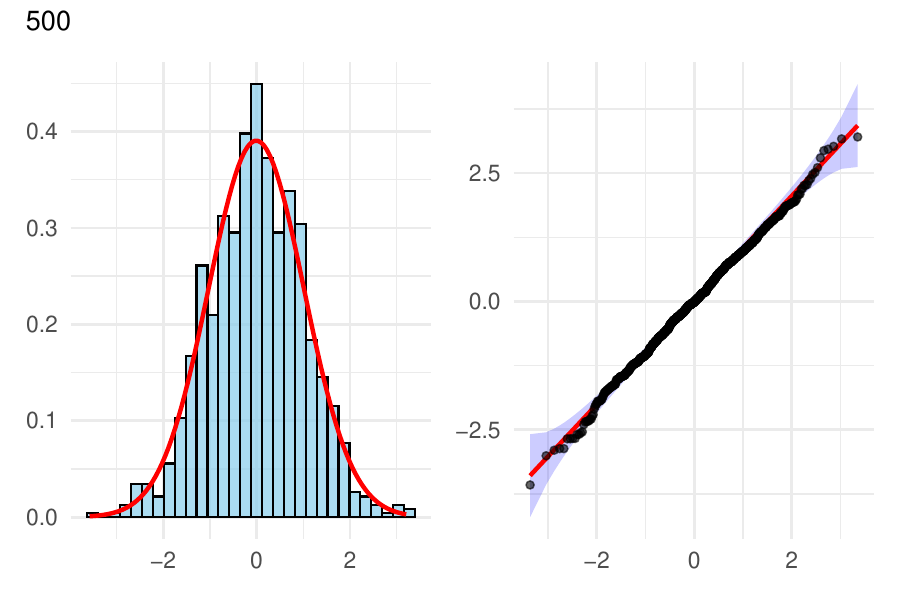}
        \caption{Histograms of simulated samples with overlay of the density curve of the standard normal distribution and Q-Q plot for sample size 500.}
        \label{fig:plot2}
    \end{minipage}
\end{figure}

\begin{figure}[H]
    \centering
    \begin{minipage}[b]{0.47\linewidth}
        \centering
        \includegraphics[width=\linewidth]{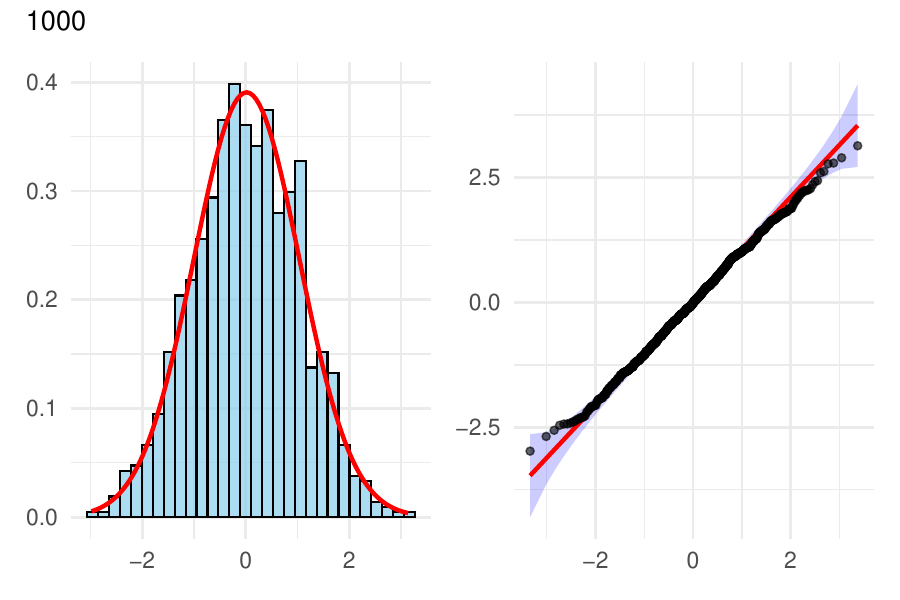}
        \caption{Histograms of simulated samples with overlay of the density curve of the standard normal distribution and Q-Q plot for sample size 1,000.}
        \label{fig:plot3}
    \end{minipage}
    \hfill
    \begin{minipage}[b]{0.47\linewidth}
        \centering
        \includegraphics[width=\linewidth]{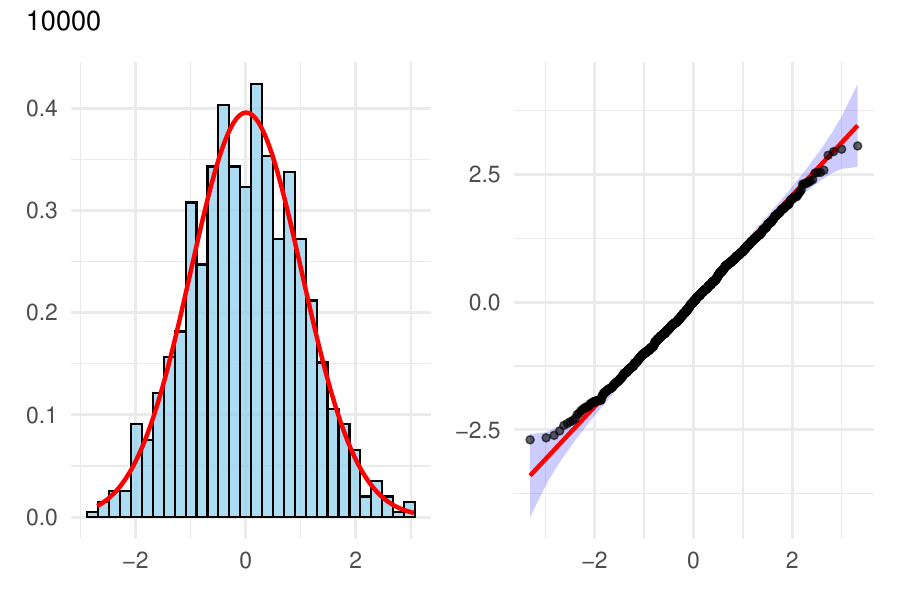}
        \caption{Histograms of simulated samples with overlay of the density curve of the standard normal distribution and Q-Q plot for sample size 10,000.}
        \label{fig:plot4}
    \end{minipage}
\end{figure}

\begin{figure}[H]
    \centering
    \begin{minipage}[b]{0.47\linewidth}
        \centering
        \includegraphics[width=\linewidth]{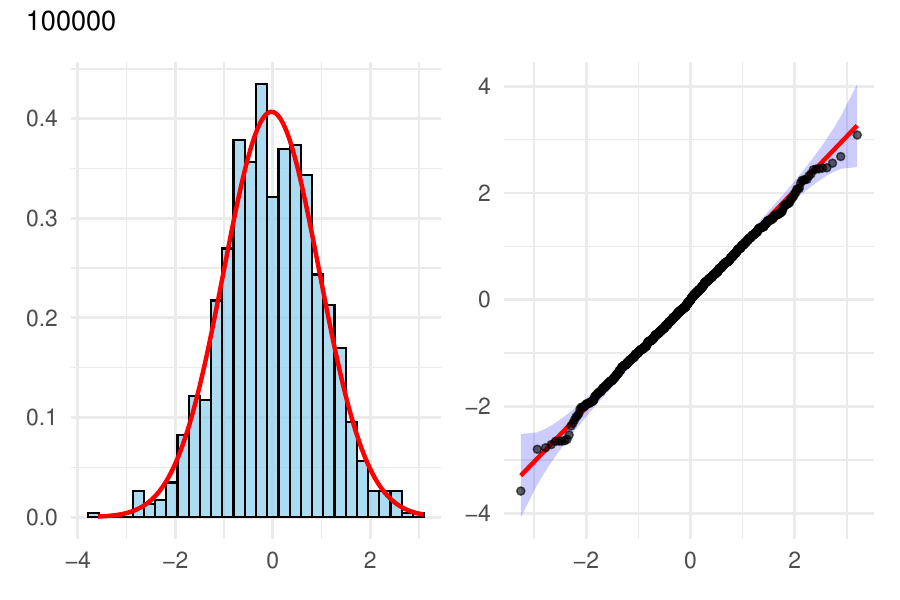}
        \caption{Histograms of simulated samples with overlay of the density curve of the standard normal distribution and Q-Q plot for sample size 100,000.}
        \label{fig:plot5}
    \end{minipage}
    \hfill
    \begin{minipage}[b]{0.47\linewidth}
        \centering
        \includegraphics[width=\linewidth]{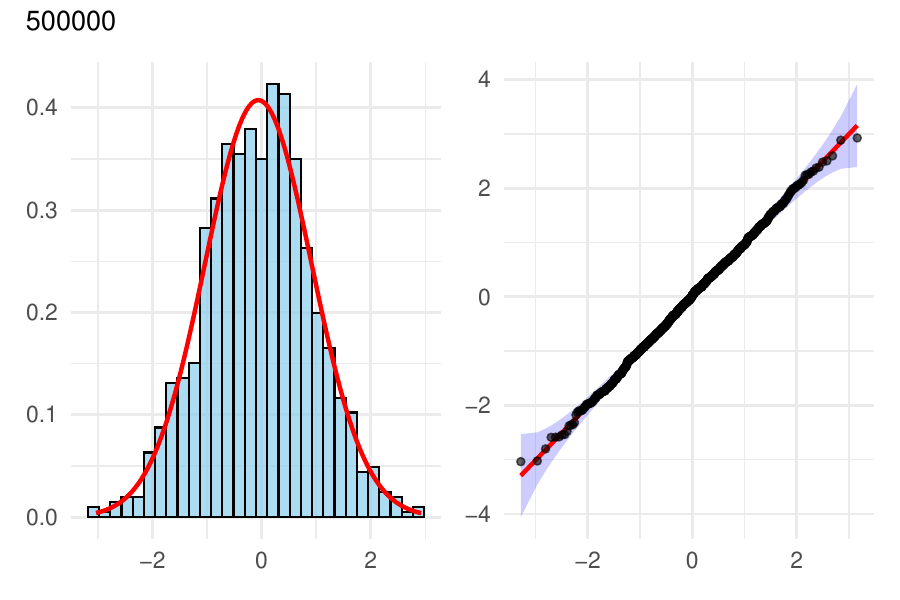}
        \caption{Histograms of simulated samples with overlay of the density curve of the standard normal distribution and Q-Q plot for sample size 500,000.}
        \label{fig:plot6}
    \end{minipage}
\end{figure}

\begin{figure}[H]
    \centering
    \begin{minipage}[b]{0.47\linewidth}
        \centering
        \includegraphics[width=\linewidth]{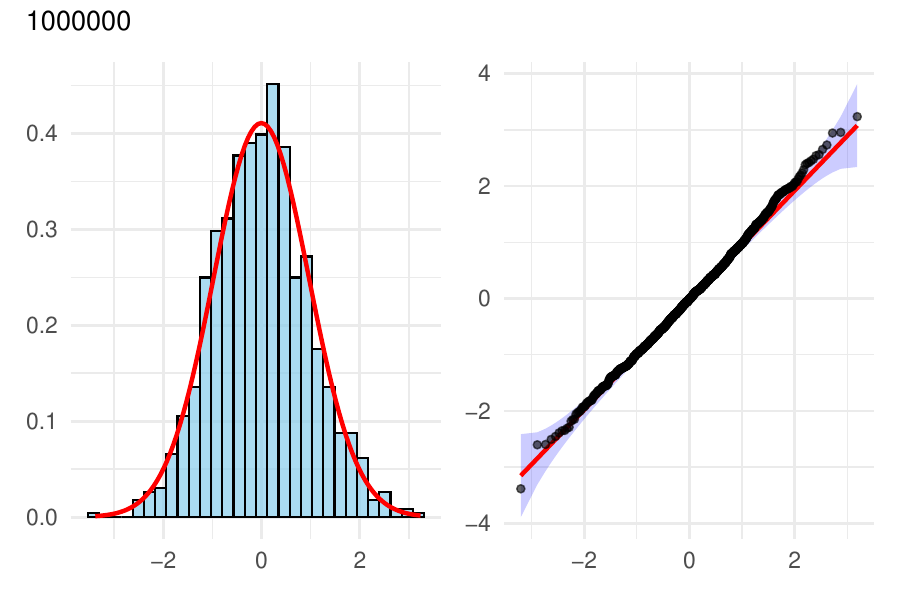}
        \caption{Histograms of simulated samples with overlay of the density curve of the standard normal distribution and Q-Q plot for sample size 1,000,000.}
        \label{fig:plot7}
    \end{minipage}
    \hfill
    \begin{minipage}[b]{0.47\linewidth}
        \centering
        \includegraphics[width=\linewidth]{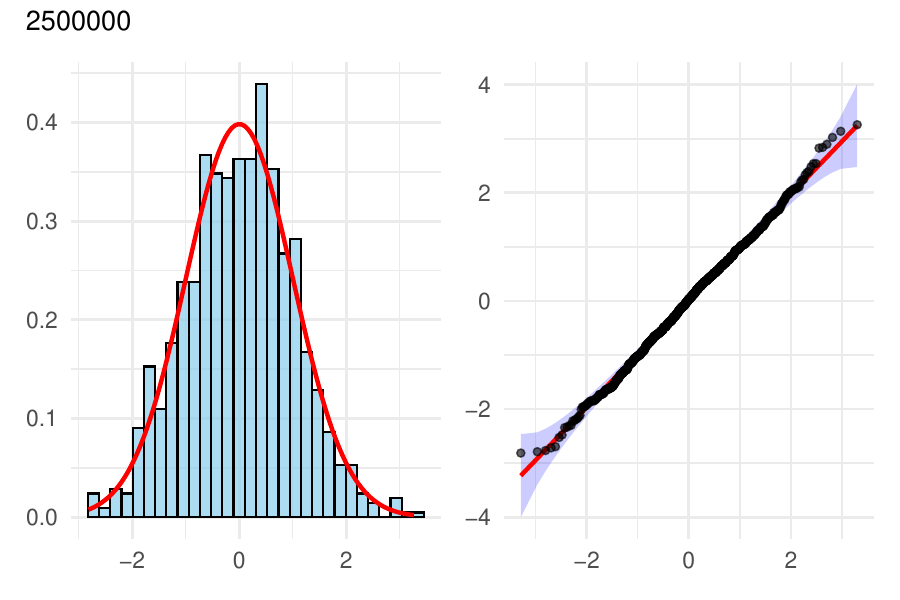}
        \caption{Histograms of simulated samples with overlay of the density curve of the standard normal distribution and Q-Q plot for sample size 2,500,000.}
        \label{fig:plot8}
    \end{minipage}
\end{figure}

Thus, it is possible to clearly observe the behavior predicted by the Central Limit Theorem when using a large number of samples, especially in samples with replacement whose size is equivalent to that of the simulated population, as illustrated in the next graph. This pattern becomes even more evident in this context, reinforcing the robustness and reliability of the results obtained. It is worth noting that the simulation of all values — covering up to 5 million elements — required a high computational cost and a significant processing time.

\begin{figure}[H]
    \centering
    \includegraphics[width=.8\linewidth]{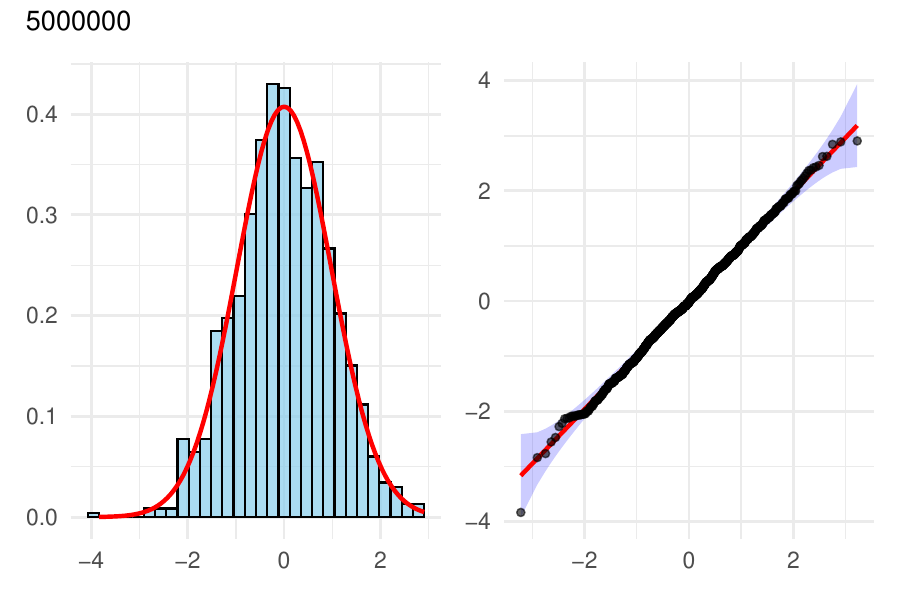}
    \caption{Histograms of simulated samples with overlay of the density curve of the standard normal distribution and Q-Q plot for sample size 5,000,000.}
    \label{fig:enter-label}
\end{figure}

To complement the visual analysis, the Kolmogorov-Smirnov (KS) goodness-of-fit test was applied to each sample size. This statistical test, introduced by Kolmogorov (1933) and later improved by Smirnov (1948), aims to verify whether a sample follows a specific theoretical distribution — in this case, the standard normal distribution \(\mathcal{N}(0,1)\) [3]. The test consists of comparing the empirical distribution function of the sample with the cumulative distribution function of the standard normal.

\(p\)-values greater than 0.05 (5\% significance level, equivalent to 95\% confidence) indicate that there is insufficient statistical evidence to reject the null hypothesis that the sample follows a normal distribution. This suggests that the data exhibit behavior compatible with normality. On the other hand, \(p\)-values lower than 0.05 indicate significant deviations from normality, indicating that the sample distribution differs significantly from the standard normal.

\begin{table}[h]
\centering
\caption{Kolmogorov-Smirnov test results for different sample sizes.}
\begin{tabular}{|c|c|c|}
\hline
\textbf{Sample Size} & \textbf{KS Statistic} & \textbf{p-value} \\ \hline
30                          & 0.014614              & 0.9832        \\ \hline
500                         & 0.019739              & 0.8307       \\ \hline
1,000                       & 0.035131              & 0.1694        \\ \hline
10,000                      & 0.023152              & 0.6573        \\ \hline
100,000                     & 0.027752              & 0.4244        \\ \hline
500,000                     & 0.035566              & 0.1593        \\ \hline
1,000,000                   & 0.02872              & 0.3815        \\ \hline
2,500,000                   & 0.023783              & 0.6236        \\ \hline
5,000,000                   & 0.018089              & 0.899        \\ \hline
\end{tabular}
\label{tab:ks_test}
\end{table}

This table complements the visual analysis by offering a quantitative assessment of the adherence of the sampling distributions to normality, strengthening the interpretation of the graphs presented. The results clearly confirm that all the data fit according to the proposed conjecture, further demonstrating the consistency of the observed patterns.

\subsection{Acknowledgments}

\quad \, We would like to express our gratitude to Professors Raimundo Bastos Jr., Roberto Vila Gabriel, Igor Lima, and Eduardo Antônio for their support and valuable suggestions for this work.

\newpage

\end{document}